\documentclass[10pt]{amsart}

\usepackage{amssymb,latexsym,amsmath,amsthm,amscd}
\usepackage[left=3cm,top=3cm,right=3cm,bottom=3cm]{geometry}
\usepackage{graphicx}

\providecommand{\abs}[1]{\left\vert #1 \right\vert}

\theoremstyle{plain}
\newtheorem{thm}{Theorem}
\newtheorem*{thm*}{Theorem}
\newtheorem{lemma}[thm]{Lemma}

\newtheorem*{lem*}{Lemma}

\newtheorem*{prop*}{Proposition}

\newtheorem*{cor*}{Corollary}

\newtheorem*{conj*}{Conjecture}

\theoremstyle{definition}

\newtheorem*{cons*}{Construction}

\newtheorem*{df*}{Definition}

\newtheorem*{nota*}{Notation}
\newtheorem*{problem*}{Problem}

\newtheorem*{qu*}{Question}

\newtheorem{rmk}[thm]{Remark}
\newtheorem*{rmk*}{Remark}

\newtheorem*{ex*}{Example}

\begin{document}

\title{Lower Bounds for the Least Prime in Chebotarev}
\author{Andrew Fiori}

\thanks{The author thanks the University of Lethbridge for providing a stimulating environment to conduct this work and in particular Nathan Ng and Habiba Kadiri for directing him towards this project.
The author would also like to thank the referee whose suggestions simplified the proof of our main result.}

\ifx\XMetaElseArt\undefined
\email{andrew.fiori@uleth.ca}
\else
\ead{andrew.fiori@uleth.ca}
\fi
\address{%
Mathematics and Computer Science,
4401 University Drive,
University of Lethbridge,
Lethbridge, Alberta, T1K 3M4}
\subjclass[2010]{Primary 11R44; Secondary 11R29}

\begin{abstract}
In this paper we show there exists an infinite family of number fields $L$, Galois over $\mathbb{Q}$, for which the smallest prime $p$ of $\mathbb{Q}$ which splits completely in $L$
 has size at least $\left( \log(\abs{D_L}) \right)^{2+o(1)}$. This gives a converse to various upper bounds, which shows that they are best possible. 
\end{abstract}
\maketitle

\section{Introduction}

The purpose of this note is to prove the following result.
\begin{thm}\label{thm:1}
 There exists an infinite family of number fields $L$, Galois over $\mathbb{Q}$, for which the smallest prime $p$ of $\mathbb{Q}$ which splits completely in $L$
 has size at least
 \[ (1+o(1))\left(\frac{3e^{\gamma}}{2\pi}\right)^2\left(\frac{\log(\abs{D_L})\log(2\log\log(\abs{D_L}))}{\log\log(\abs{D_L})}\right)^2 \]
 as the absolute discriminant $D_L$ of $L$ over $\mathbb{Q}$, tends to infinity.
\end{thm}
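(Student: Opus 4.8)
The plan is to take $L$ to be the Hilbert class field $H_K$ of an imaginary quadratic field $K=\mathbb{Q}(\sqrt{-d})$, where $d\equiv 1,2\pmod 4$ is chosen so that every small rational prime is inert in $K$. The crucial elementary observation is that a prime $p\nmid D_K$ splits completely in $H_K$ exactly when it is represented by the principal form $x^2+dy^2$ of discriminant $D_K=-4d$. Since $x^2+dy^2\ge d$ whenever $y\ne 0$, while $y=0$ forces a perfect square, no prime $p<d=\abs{D_K}/4$ can split completely, and the value $p=d$ is ramified. Hence the least prime splitting completely in $L=H_K$ is at least $\abs{D_K}/4$, and the whole problem is converted into making $\abs{D_K}$ as large as possible relative to $\log\abs{D_L}$.

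For the discriminant I would use that $H_K/K$ is unramified, so $\abs{D_L}=\abs{D_K}^{h}$ exactly, where $h=h(D_K)$ is the class number; thus $\log\abs{D_L}=h\log\abs{D_K}$. Inserting the Dirichlet class number formula $h=\tfrac{\sqrt{\abs{D_K}}}{\pi}L(1,\chi_{D_K})$ gives $\sqrt{\abs{D_K}}=\pi\log\abs{D_L}/\big(L(1,\chi_{D_K})\log\abs{D_K}\big)$, so that the lower bound $\abs{D_K}/4$ for the least split prime becomes
\[ \frac{\abs{D_K}}{4}=\frac{\pi^2(\log\abs{D_L})^2}{4\,L(1,\chi_{D_K})^2(\log\abs{D_K})^2}. \]
Everything now reduces to exhibiting fundamental discriminants for which $L(1,\chi_{D_K})$ is as small as possible.

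To make $L(1,\chi_{D_K})$ small I would choose $d$, via the Chinese Remainder Theorem and Dirichlet's theorem, so that $\chi_{D_K}(p)=-1$ for every prime $p\le y$ with $y$ as large as the congruence conditions (which live modulo $\approx\prod_{p\le y}p$) permit; this forces $\log\abs{D_K}=y^{1+o(1)}$, hence $\log y=(1+o(1))\log\log\abs{D_K}$. The Euler product then yields
\[ L(1,\chi_{D_K})=(1+o(1))\prod_{p\le y}\left(1+\tfrac{1}{p}\right)^{-1}=(1+o(1))\frac{\zeta(2)\,e^{-\gamma}}{\log y}=(1+o(1))\frac{\zeta(2)\,e^{-\gamma}}{\log\log\abs{D_K}}, \]
using Mertens' theorem and $\prod_p(1-p^{-2})=1/\zeta(2)$. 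Taking logarithms in $\log\abs{D_L}=h\log\abs{D_K}$ gives $\log\abs{D_K}=(2+o(1))\log\log\abs{D_L}$, whence $\log\log\abs{D_K}=(1+o(1))\log(2\log\log\abs{D_L})$. Substituting these relations and $\zeta(2)=\pi^2/6$ into the displayed lower bound collapses the constant to $\tfrac{\pi^2 e^{2\gamma}}{16\,\zeta(2)^2}=\left(\tfrac{3e^{\gamma}}{2\pi}\right)^2$ and produces exactly the claimed expression.

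The main obstacle is the third step: I must produce an infinite sequence of fundamental discriminants $D_K$ for which $L(1,\chi_{D_K})$ genuinely attains the value $(1+o(1))\zeta(2)e^{-\gamma}/\log\log\abs{D_K}$. Forcing $\chi_{D_K}(p)=-1$ for $p\le y$ controls the head of the Euler product, but converting this into a clean \emph{upper} bound for $L(1,\chi_{D_K})$ requires controlling the tail $\prod_{p>y}(1-\chi_{D_K}(p)/p)^{-1}$ and simultaneously balancing $y$ against $\log\abs{D_K}$ so that no logarithmic factor in the constant is lost. The crude bounds $\abs{D_K}^{-o(1)}\ll L(1,\chi_{D_K})\ll\log\abs{D_K}$ already suffice to pin down $\log\abs{D_K}=(2+o(1))\log\log\abs{D_L}$, but recovering the sharp constant $3e^{\gamma}/(2\pi)$ is precisely where the delicate analysis of small values of $L(1,\chi)$ must be carried out.
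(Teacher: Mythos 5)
Your skeleton is the same as the paper's: take $L=H_K$ for imaginary quadratic $K$, use the fact that a rational prime splitting completely in $H_K$ must be represented by the principal form (equivalently, must split into principal primes of $K$, whence it is a norm), giving $p\ge \abs{D_K}/4$ (the paper's Lemmas 2--3); then combine $\log\abs{D_L}=h_K\log\abs{D_K}$ with the class number formula (the paper's Lemma \ref{lem:disc}) and feed in extreme small values of $L(1,\chi_{D_K})$. Your bookkeeping is also correct: $\pi^2e^{2\gamma}/(16\zeta(2)^2)=(3e^{\gamma}/2\pi)^2$, and the relations $\log\abs{D_K}=(2+o(1))\log\log\abs{D_L}$ and $\log\log\abs{D_K}=(1+o(1))\log(2\log\log\abs{D_L})$ are legitimately derived (your appeal to Siegel's ineffective lower bound works, though the paper avoids needing any lower bound on $L(1,\chi_{D_K})$ by applying monotonicity of $y\mapsto y\log(2\log y)/\log y$ to the identity $\log\abs{D_L}=f_d(x_d)$).

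The genuine gap is the one you flag yourself, and it is the entire analytic heart of the theorem: the existence of infinitely many fundamental discriminants with
\[ L(1,\chi_{D_K})\le (1+o(1))\,\frac{\zeta(2)e^{-\gamma}}{\log\log\abs{D_K}} \]
does not follow from your CRT/Dirichlet construction, and no "balancing of $y$" can rescue it. Forcing $\chi_{D_K}(p)=-1$ for all $p\le y$ costs a modulus of size about $\prod_{p\le y}p=e^{(1+o(1))y}$, so necessarily $y\lesssim\log\abs{D_K}$; the congruence conditions then say nothing about $\chi_{D_K}(p)$ for primes in the huge range $\log\abs{D_K}<p<\abs{D_K}^{1/2}$, while P\'olya--Vinogradov plus partial summation only tames the contribution beyond roughly $\sqrt{\abs{D_K}}\log\abs{D_K}$. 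The uncontrolled middle range can a priori contribute as much as $\sum_{y<p<\sqrt{\abs{D_K}}}1/p\asymp\log\log\abs{D_K}$ to $\log L(1,\chi_{D_K})$, i.e.\ it could wipe out not just the constant but the entire $1/\log\log$ decay. Turning the truncated Euler product heuristic into an unconditional theorem is precisely the content of Paley's and Chowla's work (Littlewood's, under GRH), and this is exactly what the paper imports as Theorem \ref{thm:hkub}, with the proof of Theorem \ref{thm:1} then being the elementary assembly you describe. So your proposal becomes a complete proof, essentially identical to the paper's, once your heuristic third step is replaced by a citation to Chowla; as it stands, the hardest ingredient is asserted rather than proved.
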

The result is independent of the Generalized Riemann Hypothesis. 
The result complements the existing literature on what is essentially a converse problem, stated generally as
\begin{problem*}
 Let $K$ be a number field, and $L$ be a Galois extension of $K$, for any conjugacy class $\mathcal{C}$ in $\Gamma(L/K)$, the Galois group of $L/K$, show that the smallest (in norm) 
 unramified degree one prime $\mathfrak{p}$ of $K$ for which the conjugacy class ${\rm Frob}_{\mathfrak{p}}$ is $\mathcal{C}$ is {\it small} relative to $\abs{D_L}$, the absolute discriminant of $L/K$.
\end{problem*}
Solutions to this problem have important applications in the explicit computation of class groups (see \cite{BelabasSmallGeneratorsClassGroup}) where smaller is better.
Some of the history of just how small we can get is summarized below:
\begin{itemize}
 \item Lagarias and Odlyzko showed $N_{K/\mathbb{Q}}(\mathfrak{p}) < \left( \log(\abs{D_L}) \right)^{2+o(1)}$ conditionally on GRH (see \cite{LagariasOdlyzko}).
 \item Bach and Sorenson gave an explicit constant $C$ so that $N_{K/\mathbb{Q}}(\mathfrak{p}) < C  \left( \log(\abs{D_L}) \right)^{2}$ conditionally on GRH (see \cite{BachSorenson}).
 \item Lagarias, Montgomery,
and Odlyzko showed there is a constant $A$ such that $N_{K/\mathbb{Q}}(\mathfrak{p}) < \abs{D_L}^A$ (see \cite{LagariasMontgomeryOdlyzko}).
 \item Zaman showed $N_{K/\mathbb{Q}}(\mathfrak{p}) < \abs{D_L}^{40}$ for $D_L$ sufficiently large (see \cite{Zaman}).
 \item Kadiri, Ng and Wong improved this to $N_{K/\mathbb{Q}}(\mathfrak{p}) < \abs{D_L}^{16}$ for $D_L$ sufficiently large (see \cite{KadiriNgWong}).
 \item Ahn and Kwon showed $N_{K/\mathbb{Q}}(\mathfrak{p}) < \abs{D_L}^{12577}$ for all $L$ (see \cite{AhnKwon}).
\end{itemize}
By the above, Theorem \ref{thm:1} and the GRH bound above are best possible up to the exact $o(1)$ term.

\begin{rmk*}
The family under consideration will be a subfamily of the Hilbert class fields of quadratic imaginary extensions of $\mathbb{Q}$.
All of the Galois groups will be generalized dihedral groups, and in the family the degree of the extensions goes to infinity.

We also would like to point out the work of Sandari (see \cite[Sec. 1.3]{SardariLeastPrime}) where some similar features of this family are remarked on in a different context.
\end{rmk*}

\section{Proofs}

We first recall a few basic facts from algebraic number theory and class field theory.
\begin{lemma}
Let $K = \mathbb{Q}(\sqrt{-d})$ where $d=\abs{{\rm disc}(K)}$, let $\mathfrak{p}$ be a principal prime ideal of $K$.
If we have $N_{K/\mathbb{Q}}(\mathfrak{p}) = (p)$ then $p$ is a norm of $\mathcal{O}_K$ and hence $p\ge d/4$.
\end{lemma}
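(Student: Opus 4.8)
The plan is to split the statement into its two assertions---that $p$ is a norm of $\mathcal{O}_K$ and that $p \ge d/4$---and to dispatch the first by a principality argument and the second by an explicit computation with the norm form. First I would use that $\mathfrak{p}$ is principal to write $\mathfrak{p} = (\alpha)$ for some $\alpha \in \mathcal{O}_K$. The hypothesis $N_{K/\mathbb{Q}}(\mathfrak{p}) = (p)$ says that the absolute norm of the ideal $\mathfrak{p}$ is the prime $p$, and for a principal ideal this equals $\abs{N_{K/\mathbb{Q}}(\alpha)}$. Since $K$ is imaginary quadratic, the nontrivial automorphism is complex conjugation, so $N_{K/\mathbb{Q}}(\alpha) = \alpha\overline{\alpha} = \abs{\alpha}^2 \ge 0$; hence $N_{K/\mathbb{Q}}(\alpha) = p$ with no sign ambiguity, which is precisely the statement that $p$ is a norm of $\mathcal{O}_K$.

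For the inequality I would parametrize elements of the ring of integers. Because $-d = \mathrm{disc}(K)$ is a fundamental discriminant we have $d \equiv 0$ or $3 \pmod 4$, and in either case every $\alpha \in \mathcal{O}_K$ can be written uniformly as $\alpha = \tfrac{1}{2}(m + n\sqrt{-d})$ with $m, n \in \mathbb{Z}$ subject to $m \equiv n d \pmod 2$. A direct computation then gives
\[ 4p = 4\,N_{K/\mathbb{Q}}(\alpha) = m^2 + d n^2, \]
so that the claim $p \ge d/4$ is equivalent to $m^2 + d n^2 \ge d$.

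The one point requiring care---and the only place where the primality of $p$ enters---is to exclude the degenerate case $n = 0$. If $n \neq 0$, then $\abs{n} \ge 1$ forces $d n^2 \ge d$, whence $4p = m^2 + d n^2 \ge d$ and the bound follows immediately. If instead $n = 0$, the formula reads $4p = m^2$, i.e. $p = (m/2)^2$ is a perfect square, which is impossible for a prime $p$ (equivalently, $n = 0$ means $\alpha \in \mathbb{Q}$, so $N_{K/\mathbb{Q}}(\alpha)$ is the square of a rational number and cannot equal the prime $p$). Thus $n \neq 0$ is automatic and the estimate $p \ge d/4$ holds. I expect this exclusion of $n = 0$ to be the only subtlety; everything else is a routine unwinding of the definitions of the ideal norm and of the integral basis of $\mathcal{O}_K$.
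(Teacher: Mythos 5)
Your proof is correct and follows essentially the same route as the paper: positivity of norms in an imaginary quadratic field handles the first assertion, and writing $\alpha$ in half-integer coordinates, excluding the rational case (where the norm is a square, hence not prime), gives $4p = m^2 + dn^2 \ge d$. Your explicit parametrization with the congruence $m \equiv nd \pmod 2$ is just a more detailed version of the paper's observation that $\mathcal{O}_K \subset \tfrac{1}{2}\mathbb{Z} + \tfrac{\sqrt{-d}}{2}\mathbb{Z}$, so the two arguments coincide.
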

\begin{proof}
Assuming $\mathfrak{p}$ is principally generated by $x$, then $N_{K/\mathbb{Q}}(\mathfrak{p})$ is principally generated by $N_{K/\mathbb{Q}}(x)$. As norms from $K$ are positive, this gives that $p$ must be a norm.

We next note that for $x+y\sqrt{-d}\in \mathcal{O}_K$ the expression $N_{K/\mathbb{Q}}(x+y\sqrt{-d}) = x^2+dy^2$ cannot be prime if $y=0$.
Now, because $\mathcal{O}_k \subset \tfrac{1}{2}\mathbb{Z} + \tfrac{\sqrt{-d}}{2}\mathbb{Z}$ we conclude that if the norm is a prime, then $y\ge \frac{1}{2}$, from which it follows that if
$p$ is a norm then $p\ge d/4$.
\end{proof}

\begin{lemma}\label{lem:sizep}
Let $K = \mathbb{Q}(\sqrt{-d})$ where $d=\abs{{\rm disc}(K)}$, suppose that $H$ is the Hilbert class field of $K$.
If $p$ is a prime of $\mathbb{Z}$ which splits completely in $H$, then $p$ splits in $K$ as $(p) = \mathfrak{p}_1\mathfrak{p}_1$ where both $\mathfrak{p}_1$ and $\mathfrak{p}_2$ are principal and $N_{K/\mathbb{Q}}(\mathfrak{p}_i) = (p)$.
In particular, by the previous lemma $p\ge d/4$.
\end{lemma}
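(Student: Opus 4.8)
The plan is to unpack the meaning of "splits completely in $H$" using the fundamental connection between splitting of primes in the Hilbert class field and principality of ideals. First I would observe that since $H/\mathbb{Q}$ is Galois (the Hilbert class field of an imaginary quadratic field is Galois over $\mathbb{Q}$ because $K/\mathbb{Q}$ is), a prime $p$ splits completely in $H$ if and only if it splits completely in every subextension, in particular in $K$. So the first step is to conclude that $p$ splits in $K$, giving a factorization $(p) = \mathfrak{p}_1 \mathfrak{p}_2$ into distinct primes of $\mathcal{O}_K$, each of residue degree one so that $N_{K/\mathbb{Q}}(\mathfrak{p}_i) = (p)$. (I note the statement has a typo, writing $\mathfrak{p}_1 \mathfrak{p}_1$ for $\mathfrak{p}_1 \mathfrak{p}_2$.)

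The heart of the argument is establishing that $\mathfrak{p}_1$ and $\mathfrak{p}_2$ are \emph{principal}. The key tool here is class field theory: $\mathrm{Gal}(H/K)$ is canonically isomorphic, via the Artin map, to the ideal class group $\mathrm{Cl}(K)$, and an unramified prime $\mathfrak{p}$ of $K$ splits completely in $H/K$ precisely when its Frobenius is trivial, i.e.\ when $\mathfrak{p}$ lies in the trivial ideal class, which is to say $\mathfrak{p}$ is principal. So I would argue: since $p$ splits completely in $H$, it splits completely in $H/K$ as well, meaning each prime $\mathfrak{p}_i$ above $p$ splits completely in $H/K$; hence the Artin symbol of each $\mathfrak{p}_i$ is trivial, and therefore each $\mathfrak{p}_i$ is principal.

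With principality in hand, the conclusion is immediate: each $\mathfrak{p}_i$ is a principal prime with $N_{K/\mathbb{Q}}(\mathfrak{p}_i) = (p)$, so the previous lemma applies directly and yields $p \ge d/4$.

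The main obstacle, conceptually, is invoking the correct characterization from class field theory with the right direction of implication. The subtlety is that I must pass the "splits completely over $\mathbb{Q}$" hypothesis down to "splits completely over $K$" for the relative extension $H/K$, which requires knowing that the residue degrees multiply correctly through the tower $\mathbb{Q} \subset K \subset H$: total splitting in $H/\mathbb{Q}$ forces total splitting in both $K/\mathbb{Q}$ and $H/K$. Once that tower bookkeeping is handled, the identification of trivial Frobenius with principal ideals via the Artin reciprocity map for $H/K$ is exactly the defining property of the Hilbert class field, and everything else is routine.
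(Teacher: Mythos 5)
Your proof is correct and follows essentially the same route as the paper's: both pass from complete splitting in $H/\mathbb{Q}$ to complete splitting in $K/\mathbb{Q}$ and $H/K$ via multiplicativity of splitting data in towers, and both then invoke the defining property of the Hilbert class field --- that the Artin map identifies principal ideals with the trivial Frobenius class, so complete splitting of $\mathfrak{p}_i$ in $H/K$ forces principality. The only difference is presentational: you state the Artin isomorphism $\mathrm{Gal}(H/K)\cong \mathrm{Cl}(K)$ explicitly, whereas the paper phrases the same fact as ``principal ideals map to the trivial Galois element.''
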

\begin{proof}
 The first claim is clear because ramification degrees, inertia degrees and hence splitting degrees are multiplicative in towers.
 That $\mathfrak{p}_i$ must be principal is a consequence of class field theory.
 Principal ideals for $\mathcal{O}_K$ map to the trivial Galois element for the Galois group of the Hilbert class field.
 However, for unramified prime ideals this map gives Frobenius. As the Frobenius element is trivial precisely when the inertial degree is $1$, equivalently for Galois fields when the prime splits completely, we conclude the result.
\end{proof}

\begin{rmk}
 Denote by $\chi_d$ the quadratic Dirichlet character with fundamental discriminant $-d$.
 The main idea of the proof is to use the class number formula with lower bounds for $L(1,\chi_d)$. Using Siegel's ineffective bound gives
 \[ d = h_K^{2+o(1)} = \log\abs{D_H}^{2+o(1)}. \]
 To obtain our precise result we refine the $o(1)$ term using extreme values of $L(1,\chi_d)$.
\end{rmk}

\begin{lemma}\label{lem:disc}
Let $K = \mathbb{Q}(\sqrt{-d})$ where $d=\abs{{\rm disc}(K)}>16$, suppose that $H$ is the Hilbert class field of $K$.
Then 
\[ \log\abs{D_H} = h_K \log(d) =  \frac{1}{\pi}L(1,\chi_d)\sqrt{d}\log(d) \]
where $h_K$ is the class number of $K$, $D_H$ is the discriminant of $H$ and $\chi_d$ is the quadratic Dirichlet character with fundamental discriminant $-d$.
\end{lemma}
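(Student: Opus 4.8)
The plan is to establish the two equalities separately: the first, $\log\abs{D_H}=h_K\log(d)$, is purely algebraic and records how discriminants behave in the unramified tower $\mathbb{Q}\subseteq K\subseteq H$, while the second, $h_K\log(d)=\tfrac{1}{\pi}L(1,\chi_d)\sqrt{d}\log(d)$, is analytic and amounts to the Dirichlet class number formula together with an evaluation of the invariants of $K$.

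For the first equality I would use the tower formula for discriminants,
\[ \abs{D_H}=\abs{D_K}^{[H:K]}\cdot N_{K/\mathbb{Q}}\!\left(\mathfrak{D}_{H/K}\right), \]
where $\mathfrak{D}_{H/K}$ denotes the relative discriminant ideal of $H/K$ and $D_K={\rm disc}(K/\mathbb{Q})$. Since $H$ is by definition the Hilbert class field of $K$, the extension $H/K$ is unramified at every finite prime, so the relative different, and hence $\mathfrak{D}_{H/K}$, is the unit ideal and its norm equals $1$. Using $[H:K]=h_K$ and $\abs{D_K}=d$, this collapses to $\abs{D_H}=d^{h_K}$, and taking logarithms gives $\log\abs{D_H}=h_K\log(d)$.

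For the second equality I would factor the Dedekind zeta function as $\zeta_K(s)=\zeta(s)L(s,\chi_d)$ and compare residues at $s=1$. The analytic class number formula expresses $\mathrm{Res}_{s=1}\zeta_K(s)=\tfrac{2^{r_1}(2\pi)^{r_2}h_K R_K}{w_K\sqrt{\abs{D_K}}}$; for the imaginary quadratic field $K$ one has $r_1=0$, $r_2=1$, trivial regulator $R_K=1$, and $\abs{D_K}=d$, while $\zeta(s)$ contributes residue $1$, so $\mathrm{Res}_{s=1}\zeta_K(s)=L(1,\chi_d)$. Equating the two expressions yields $L(1,\chi_d)=\tfrac{2\pi h_K}{w_K\sqrt{d}}$. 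The one invariant needing attention is $w_K$, the number of roots of unity in $K$: it equals $2$ for every imaginary quadratic field apart from $\mathbb{Q}(i)$ and $\mathbb{Q}(\sqrt{-3})$, whose discriminants are $-4$ and $-3$, so the hypothesis $d>16$ comfortably forces $w_K=2$. Substituting $w_K=2$ gives $h_K=\tfrac{\sqrt{d}}{\pi}L(1,\chi_d)$, and multiplying through by $\log(d)$ produces the claimed identity.

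Each step invokes a standard structural result, so there is no serious obstacle; the only genuine verifications are that $H/K$ is unramified, which makes the relative discriminant trivial, and that $w_K=2$ under the stated hypothesis. Both follow immediately from the choice of $K$ and the defining property of the Hilbert class field.
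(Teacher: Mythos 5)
Your proof is correct and takes essentially the same route as the paper's: the first equality via multiplicativity of the discriminant in towers (using that the Hilbert class field $H/K$ is unramified so the relative discriminant is trivial), and the second via the analytic class number formula $h_K = \tfrac{\sqrt{d}}{\pi}L(1,\chi_d)$. The paper merely cites these two facts, while you supply the full derivations, including the check that $w_K=2$ under the hypothesis $d>16$, which is where that hypothesis enters.
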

\begin{proof}
The first equality is immediate from the multiplicativity of the discriminant in towers, the second follows from the analytic class number formula
\[ h_{K} = \frac{\sqrt{d}}{\pi}L(1,\chi_d).\qedhere \]
\end{proof}

The estimates on the extreme values of $L(1,\chi_d)$ which we need are the following. 

\begin{thm}\label{thm:hkub}
There exists a family of quadratic imaginary fields $K = \mathbb{Q}(\sqrt{-d})$ where $d=\abs{{\rm disc}(K)}$ such that for $\chi_d$, the quadratic Dirichlet character with fundamental discriminant $-d$, we have
\[ L(1,\chi_d) < (1+o(1))\frac{\pi^2}{6e^{\gamma}\log\log (d)}. \]
\end{thm}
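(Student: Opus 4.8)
The plan is to make $L(1,\chi_d)$ small by forcing $\chi_d(p)=-1$ for every prime $p$ up to a threshold $y$, which minimises the small-prime part of the Euler product. Writing $L(1,\chi_d)=\prod_p(1-\chi_d(p)/p)^{-1}$ and isolating the primes below $y$, the constraint $\chi_d(p)=-1$ turns the finite product into $\prod_{p\le y}(1+1/p)^{-1}$, and Mertens' theorems give
\[ \prod_{p\le y}\Bigl(1+\tfrac1p\Bigr)^{-1}=\Bigl(\prod_{p\le y}\tfrac{1-1/p^2}{1-1/p}\Bigr)^{-1}\sim\frac{\pi^2}{6e^{\gamma}\log y}, \]
since $\prod_p(1-1/p^2)=6/\pi^2$ and $\prod_{p\le y}(1-1/p)^{-1}\sim e^{\gamma}\log y$. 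I would take $y=\log d/\log\log d$, so that $\log y=(1+o(1))\log\log d$ and the small-prime part is already $(1+o(1))\pi^2/(6e^{\gamma}\log\log d)$, the exact main term sought; crucially this choice also keeps $\prod_{p\le y}p=\exp(\theta(y))=d^{o(1)}$, leaving room to average over the family.

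To produce the fields, I would build $d$ by the Chinese Remainder Theorem: for each odd $p\le y$ the condition $\chi_d(p)=\bigl(\tfrac{-d}{p}\bigr)=-1$ selects half of the residues mod $p$, while $\chi_d(2)=-1$ fixes $d$ modulo $8$, and these combine to a union of residue classes modulo $8\prod_{p\le y}p$. Counting squarefree fundamental discriminants $-d$ with $D<d\le 2D$ in these classes yields $\gg D\,2^{-\pi(y)}=D^{1-o(1)}$ admissible $d$, since $2^{\pi(y)}=d^{o(1)}$. This gives an infinite family (with $d\to\infty$), and by construction every member has small-prime part equal to the main term above.

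The remaining, and genuinely hard, step is to show that the tail $\prod_{p>y}(1-\chi_d(p)/p)^{-1}$ is $1+o(1)$ for at least one $d$ in the family; equivalently $\sum_{p>y}\chi_d(p)/p\le o(1)$, where only an upper bound and only an absolute $o(1)$ are needed since the main term already carries the full constant. For the range $y<p\le z$ with $z$ a fixed power of $D$, I would run a second-moment argument over the family: expanding $\bigl(\sum_{y<p\le z}\chi_d(p)/p\bigr)^2$ and averaging, the off-diagonal terms involve $\sum_{d}\chi_d(pq)$ with $pq$ nonsquare, which the P\'olya--Vinogradov inequality (applied to the Kronecker symbol in $d$, after separating the congruence conditions defining the family) shows to be negligible, while the diagonal contributes $\ll\sum_{p>y}p^{-2}\ll 1/y=o(1)$. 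Hence the medium-prime tail is $o(1)$ for almost all $d$ in the family.

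The main obstacle is the contribution of the truly large primes $p>z$: unconditionally one cannot control $\sum_{p>z}\chi_d(p)/p$ for an individual $\chi_d$ when $z$ is a power of $d$, since this has the strength of prime counting in progressions and is shadowed by possible Siegel zeros, and naive averaging over the family also breaks down once $p$ exceeds the length of the family. I would resolve this either through the higher-moment/large-sieve estimates and mean-value device of Granville--Soundararajan, whose results on the distribution of $L(1,\chi_d)$ establish precisely the existence of discriminants with $L(1,\chi_d)$ as small as $(1+o(1))\pi^2/(6e^{\gamma}\log\log d)$, or by combining the Euler tail with P\'olya--Vinogradov applied to $L(1,\chi_d)=\sum_n\chi_d(n)/n$ to absorb the contribution beyond a power of $d$. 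Applying this input to the family constructed above produces a $d$ for which both the medium- and large-prime tails are $o(1)$, which completes the bound.
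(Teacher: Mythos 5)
The paper gives no proof of this theorem at all: it quotes it from the literature, attributing the conditional version to Littlewood \cite{LittlewoodClassNumber}, the first unconditional version to Paley \cite{PaleyCharacters}, and the version stated to Chowla \cite{Chowla}, with Granville--Soundararajan \cite{granvillesound} mentioned only as a possible refinement of the constants. Your closing fallback --- invoke Granville--Soundararajan, whose results ``establish precisely the existence'' of such discriminants --- is therefore in substance the same move the paper makes, and it is a legitimate way to finish; but it makes your entire CRT construction, Mertens computation, and second-moment argument redundant, since the cited theorem already \emph{is} the statement to be proved (you could equally well just cite \cite{Chowla}).

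Judged instead as a self-contained proof, your proposal has a genuine gap, and it sits exactly where you flagged it: the primes $p>z$. Your other proposed patch --- P\'olya--Vinogradov applied to $L(1,\chi_d)=\sum_n\chi_d(n)/n$ to ``absorb'' the contribution beyond a power of $d$ --- does not work as stated. P\'olya--Vinogradov bounds complete character sums over blocks of consecutive integers; it gives no information about the prime sum $\sum_{p>z}\chi_d(p)/p$ or the Euler tail $\prod_{p>z}(1-\chi_d(p)/p)^{-1}$, and there is no direct way to splice a truncated Dirichlet series (a sum over all $n\le N$) onto an Euler-product tail (a product over primes $p>z$): the cross terms involve all integers having both small and large prime factors. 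Unconditional control of the prime-sum tail for an \emph{individual} $d$ is, as you yourself observe, Siegel-zero territory, so as written the argument does not close.

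The gap can, however, be closed inside your framework by abandoning prime sums altogether; this is essentially Chowla's original argument. Keep your family $\mathcal{F}$ of admissible fundamental discriminants $-d$ with $D<d\le 2D$ and $\chi_d(p)=-1$ for $p\le y$, with congruence modulus $Q=8\prod_{p\le y}p=D^{o(1)}$, and compute the first moment of $L(1,\chi_d)$ itself over $\mathcal{F}$. By P\'olya--Vinogradov and partial summation, $L(1,\chi_d)=\sum_{n\le D}\chi_d(n)/n+O(D^{-1/2}\log D)$, and the error is already $o(1/\log\log d)$, so only integers $n\le D$ ever enter and the primes beyond the reach of the family average simply never appear. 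Writing $n=n_1n_2$ with $n_1$ the $y$-smooth part of $n$, every $d\in\mathcal{F}$ satisfies $\chi_d(n)=(-1)^{\Omega(n_1)}\chi_d(n_2)$; the terms with $n_2$ a perfect square produce the main term $\prod_{p\le y}(1+1/p)^{-1}\prod_{p>y}(1-p^{-2})^{-1}=(1+o(1))\,\pi^2/(6e^{\gamma}\log y)$, while for $n_2$ not a square the map $d\mapsto\chi_d(n_2)$ is a nontrivial character of conductor $O(nQ)$, and P\'olya--Vinogradov in the $d$-aspect (with M\"obius inversion to detect the fundamental-discriminant condition) bounds the total contribution of all such terms by $D^{3/4+o(1)}=o(|\mathcal{F}|/\log\log D)$. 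Since $L(1,\chi_d)>0$ for every $d$, some $d\in\mathcal{F}$ lies below the family average, which is exactly the claimed bound once $\log y=(1+o(1))\log\log d$.
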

A result of this sort was originally proven by Littlewood conditional on the generalized Riemann hypothesis (see \cite{LittlewoodClassNumber}), his result was proven unconditionally by Paley (see \cite{PaleyCharacters})
the version stated here follows from the work of Chowla (see \cite{Chowla}). 
It is possible that the work of Granville and Soundararajan (see \cite{granvillesound}) can further refine the constants in the above, and consequently those in Theorem \ref{thm:1}.

The following proof includes several significant simplifications suggested by the referee. We would like to thank them for these valuable suggestions.
\begin{proof}[Proof of Theorem \ref{thm:1}]
 We consider the family of fields $L = H_K$ where $K$ is a field from the infinite family of Theorem \ref{thm:hkub} for which $d > 16$.
 To complete the proof we introduce some notation, define
 \[ x_d = L(1,\chi_d)\log\log(d) \qquad\text{and}\qquad f_d(x) = \frac{x\sqrt{d}\log(d)}{\pi\log\log(d)}. \]
 Then by our choice of $d$ we have
 \[ x_d < \frac{\pi^2}{6e^{\gamma}} + o(1)\]
 and by Lemma \ref{lem:disc} we have
 \[ \log{\abs{D_L}} = f_d(x_d). \]
 Now because the function $y \mapsto \frac{y\log(2\log(y))}{\log(y)}$ is increasing for $y>e$ and as 
 \[ f_d(x_d) = \log{\abs{D_L}} = h_{K}\log(d) \ge \log(16) > e \]
 it follows that
 \begin{align*} \frac{ \log\abs{D_L}\log(2\log\log\abs{D_L})}{\log\log\abs{D_L}} 
    &= \frac{f_d(x_d)\log(2\log (f_d(x_d)))}{\log(f_d(x_d))} \\
    &\leq  \frac{f_d(\frac{\pi^2}{6e^\gamma} + o(1))\log(2\log(f_d(\frac{\pi^2}{6e^\gamma} + o(1))))}{\log(f_d(\frac{\pi^2}{6e^\gamma} + o(1)))}\\
    &\leq (1+o(1))\frac{\pi}{3e^\gamma}\sqrt{d}.
 \end{align*}
 Combining the above with the bounds $p\ge \frac{d}{4}$ from Lemma \ref{lem:sizep} we obtain the result. 
 \end{proof}

\section{Numerics}

Table \ref{table:1} illustrates the phenomenon by giving the ratio
\[ {\rm Ratio} =  p /  \left(\frac{3e^{\gamma}}{2\pi}\right)^2\left(\frac{\log(\abs{D_L})\log(2\log\log(\abs{D_L}))}{\log\log(\abs{D_L})}\right)^2 \]
for an example of a the Hilbert class field of a quadratic imaginary field of each class number less than $100$ with large discriminant.

Note that in Table \ref{table:1} we have $K=\mathbb{Q}(\sqrt{-d})$ and $\abs{D_L} = d^{h_K}$.

\begin{table}[ht]
 \caption{Examples of smallest split primes in Hilbert class fields of $\mathbb{Q}(\sqrt{-d})$}
\vspace{-3pt}
{\small\label{table:1}
\renewcommand{\arraystretch}{0.99}
\begin{tabular}{| c | c | c| c |}
\hline
$h_K$ & $d$ & $p$ & Ratio \\
\hline
$ 1        $ & $ 163     $  & $     41 $ & $ 4.1557$ \\
 $ 2        $ & $ 427     $  & $    107 $ & $ 2.4287$ \\
 $ 3        $ & $ 907     $  & $    227 $ & $ 2.1188$ \\
 $ 4        $ & $ 1555    $  & $    389 $ & $ 1.9476$ \\
 $ 5        $ & $ 2683    $  & $    673 $ & $ 2.0276$ \\
 $ 6        $ & $ 3763    $  & $    941 $ & $ 1.9222$ \\
 $ 7        $ & $ 5923    $  & $   1481 $ & $ 2.1071$ \\
 $ 8        $ & $ 6307    $  & $   1579 $ & $ 1.7569$ \\
 $ 9        $ & $ 10627   $  & $   2657 $ & $ 2.1729$ \\
 $ 10       $ & $ 13843   $  & $   3461 $ & $ 2.2386$ \\
 $ 11       $ & $ 15667   $  & $   3917 $ & $ 2.0939$ \\
 $ 12       $ & $ 17803   $  & $   4451 $ & $ 1.9938$ \\
 $ 13       $ & $ 20563   $  & $   5147 $ & $ 1.9503$ \\
 $ 14       $ & $ 30067   $  & $   7517 $ & $ 2.3373$ \\
 $ 15       $ & $ 34483   $  & $   8623 $ & $ 2.3173$ \\
 $ 16       $ & $ 31243   $  & $   7817 $ & $ 1.9050$ \\
 $ 17       $ & $ 37123   $  & $   9281 $ & $ 1.9719$ \\
 $ 18       $ & $ 48427   $  & $  12107 $ & $ 2.2225$ \\
 $ 19       $ & $ 38707   $  & $   9677 $ & $ 1.6747$ \\
 $ 20       $ & $ 58507   $  & $  14627 $ & $ 2.1572$ \\
 $ 21       $ & $ 61483   $  & $  15373 $ & $ 2.0614$ \\
 $ 22       $ & $ 85507   $  & $  21377 $ & $ 2.5024$ \\
 $ 23       $ & $ 90787   $  & $  22697 $ & $ 2.4308$ \\
 $ 24       $ & $ 111763  $  & $  27941 $ & $ 2.6847$ \\
 $ 25       $ & $ 93307   $  & $  23327 $ & $ 2.1425$ \\
 $ 26       $ & $ 103027  $  & $  25759 $ & $ 2.1714$ \\
 $ 27       $ & $ 103387  $  & $  25847 $ & $ 2.0351$ \\
 $ 28       $ & $ 126043  $  & $  31511 $ & $ 2.2543$ \\
 $ 29       $ & $ 166147  $  & $  41539 $ & $ 2.6760$ \\
 $ 30       $ & $ 134467  $  & $  33617 $ & $ 2.1037$ \\
 $ 31       $ & $ 133387  $  & $  33347 $ & $ 1.9698$ \\
 $ 32       $ & $ 164803  $  & $  41201 $ & $ 2.2263$ \\
 $ 33       $ & $ 222643  $  & $  55661 $ & $ 2.7216$ \\
 \hline
 \end{tabular}\hfil
\begin{tabular}{| c | c | c| c |}
\hline
$h_K$ & $d$ & $p$ & Ratio \\
\hline
 $ 34       $ & $ 189883  $  & $  47491 $ & $ 2.2528$ \\
 $ 35       $ & $ 210907  $  & $  52727 $ & $ 2.3373$ \\
 $ 36       $ & $ 217627  $  & $  54409 $ & $ 2.2819$ \\
 $ 37       $ & $ 158923  $  & $  39733 $ & $ 1.6620$ \\
 $ 38       $ & $ 289963  $  & $  72493 $ & $ 2.6454$ \\
 $ 39       $ & $ 253507  $  & $  63377 $ & $ 2.2500$ \\
 $ 40       $ & $ 260947  $  & $  65239 $ & $ 2.2034$ \\
 $ 41       $ & $ 296587  $  & $  74149 $ & $ 2.3513$ \\
 $ 42       $ & $ 280267  $  & $  70067 $ & $ 2.1445$ \\
 $ 43       $ & $ 300787  $  & $  75209 $ & $ 2.1838$ \\
 $ 44       $ & $ 319867  $  & $  79967 $ & $ 2.2079$ \\
 $ 45       $ & $ 308323  $  & $  77081 $ & $ 2.0542$ \\
 $ 46       $ & $ 462883  $  & $ 115727 $ & $ 2.7990$ \\
 $ 47       $ & $ 375523  $  & $  93887 $ & $ 2.2489$ \\
 $ 48       $ & $ 335203  $  & $  83813 $ & $ 1.9638$ \\
 $ 49       $ & $ 393187  $  & $  98297 $ & $ 2.1693$ \\
 $ 50       $ & $ 389467  $  & $  97367 $ & $ 2.0743$ \\
 $ 51       $ & $ 546067  $  & $ 136519 $ & $ 2.6772$ \\
 $ 52       $ & $ 439147  $  & $ 109789 $ & $ 2.1422$ \\
 $ 53       $ & $ 425107  $  & $ 106277 $ & $ 2.0124$ \\
 $ 54       $ & $ 532123  $  & $ 133033 $ & $ 2.3604$ \\
 $ 55       $ & $ 452083  $  & $ 113021 $ & $ 1.9839$ \\
 $ 56       $ & $ 494323  $  & $ 123581 $ & $ 2.0737$ \\
 $ 57       $ & $ 615883  $  & $ 153991 $ & $ 2.4279$ \\
 $ 58       $ & $ 586987  $  & $ 146749 $ & $ 2.2565$ \\
 $ 59       $ & $ 474307  $  & $ 118583 $ & $ 1.8204$ \\
 $ 60       $ & $ 662803  $  & $ 165701 $ & $ 2.3566$ \\
 $ 61       $ & $ 606643  $  & $ 151667 $ & $ 2.1185$ \\
 $ 62       $ & $ 647707  $  & $ 161947 $ & $ 2.1768$ \\
 $ 63       $ & $ 991027  $  & $ 247759 $ & $ 3.0559$ \\
 $ 64       $ & $ 693067  $  & $ 173267 $ & $ 2.1783$ \\
 $ 65       $ & $ 703123  $  & $ 175781 $ & $ 2.1443$ \\
 $ 66       $ & $ 958483  $  & $ 239623 $ & $ 2.7278$ \\
 \hline
 \end{tabular}\hfil
\begin{tabular}{| c | c | c| c |}
\hline
$h_K$ & $d$ & $p$ & Ratio \\
\hline
 $ 67       $ & $ 652723  $  & $ 163181 $ & $ 1.9030$ \\
 $ 68       $ & $ 819163  $  & $ 204791 $ & $ 2.2546$ \\
 $ 69       $ & $ 888427  $  & $ 222107 $ & $ 2.3556$ \\
 $ 70       $ & $ 811507  $  & $ 202877 $ & $ 2.1215$ \\
 $ 71       $ & $ 909547  $  & $ 227387 $ & $ 2.2823$ \\
 $ 72       $ & $ 947923  $  & $ 236981 $ & $ 2.3061$ \\
 $ 73       $ & $ 886867  $  & $ 221717 $ & $ 2.1227$ \\
 $ 74       $ & $ 951043  $  & $ 237763 $ & $ 2.2001$ \\
 $ 75       $ & $ 916507  $  & $ 229127 $ & $ 2.0792$ \\
 $ 76       $ & $ 1086187 $  & $ 271549 $ & $ 2.3521$ \\
 $ 77       $ & $ 1242763 $  & $ 310693 $ & $ 2.5821$ \\
 $ 78       $ & $ 1004347 $  & $ 251087 $ & $ 2.0958$ \\
 $ 79       $ & $ 1333963 $  & $ 333491 $ & $ 2.6208$ \\
 $ 80       $ & $ 1165483 $  & $ 291371 $ & $ 2.2775$ \\
 $ 81       $ & $ 1030723 $  & $ 257687 $ & $ 2.0011$ \\
 $ 82       $ & $ 1446547 $  & $ 361637 $ & $ 2.6277$ \\
 $ 83       $ & $ 1074907 $  & $ 268729 $ & $ 1.9851$ \\
 $ 84       $ & $ 1225387 $  & $ 306347 $ & $ 2.1765$ \\
 $ 85       $ & $ 1285747 $  & $ 321443 $ & $ 2.2210$ \\
 $ 86       $ & $ 1534723 $  & $ 383681 $ & $ 2.5366$ \\
 $ 87       $ & $ 1261747 $  & $ 315437 $ & $ 2.0941$ \\
 $ 88       $ & $ 1265587 $  & $ 316403 $ & $ 2.0564$ \\
 $ 89       $ & $ 1429387 $  & $ 357347 $ & $ 2.2395$ \\
 $ 90       $ & $ 1548523 $  & $ 387137 $ & $ 2.3529$ \\
 $ 91       $ & $ 1391083 $  & $ 347771 $ & $ 2.1002$ \\
 $ 92       $ & $ 1452067 $  & $ 363017 $ & $ 2.1371$ \\
 $ 93       $ & $ 1475203 $  & $ 368801 $ & $ 2.1244$ \\
 $ 94       $ & $ 1587763 $  & $ 396943 $ & $ 2.2212$ \\
 $ 95       $ & $ 1659067 $  & $ 414767 $ & $ 2.2638$ \\
 $ 96       $ & $ 1684027 $  & $ 421009 $ & $ 2.2501$ \\
 $ 97       $ & $ 1842523 $  & $ 460633 $ & $ 2.3882$ \\
 $ 98       $ & $ 2383747 $  & $ 595939 $ & $ 2.9359$ \\
 $ 99       $ & $ 1480627 $  & $ 370159 $ & $ 1.9012$ \\
\hline
\end{tabular}
}
\end{table}


\renewcommand{\MR}[1]{}
\providecommand{\bysame}{\leavevmode\hbox to3em{\hrulefill}\thinspace}
\providecommand{\MRhref}[2]{%
  \href{http://www.ams.org/mathscinet-getitem?mr=#1}{#2}
}
\providecommand{\href}[2]{#2}

\end{document}